\renewcommand*\env@matrix[1][*\c@MaxMatrixCols c]{%
  \hskip -\arraycolsep
  \let\@ifnextchar\new@ifnextchar
  \array{#1}}
\newtheorem{thm}{Theorem}[section]
\newtheorem{cor}[thm]{Corollary}
\newtheorem{prop}[thm]{Proposition}
\theoremstyle{definition}
\numberwithin{equation}{section}
\newcommand{\abs}[1]{\left\vert#1\right\vert}
\newcommand{\mbb}{\mathbb}
\begin{document}
\title{Analyzing the Wu metric on a class of eggs in $\mathbb{C}^n$ -- II}
\keywords{Wu metric, Kobayashi metric, negative holomorphic curvature}
\thanks{Both the authors were supported by the DST-INSPIRE Fellowship of the Government of India.}
\subjclass{Primary: 32F45; Secondary: 32Q45, 32H15}
\author{G. P. Balakumar and Prachi Mahajan}

\address{G. P. Balakumar:  Indian Statistical Institute, Chennai -- 600113, India.}
\email{gpbalakumar@isichennai.res.in}

\address{Prachi Mahajan: Department of Mathematics, Indian Institute of Technology -- Bombay, Mumbai 400076, India.}
\email{prachi.mjn@iitb.ac.in}

\pagestyle{plain}
\begin{abstract}
We study the Wu metric for the non-convex domains of the form
\[
E_{2m} = \big\{ z \in \mbb C^n : \abs{z_1}^{2m} + \abs{z_2}^2 + \ldots + \abs{z_{n-1}}^2 + \abs{z_n}^{2} <1  \big\},
\]
where $ 0 < m < 1/2$. Explicit expressions for the Kobayashi metric and the Wu metric on such pseudo-eggs $E_{2m}$ are obtained. The Wu metric is
then verified to be a continuous Hermitian metric on $ E_{2m} $ which is real analytic everywhere except 
along the complex hypersurface $ Z = \{ (0, z_2, \ldots, z_n ) \in E_{2m} \} $. We also show that the holomorphic sectional curvature of the Wu metric for this 
non-compact family of pseudoconvex domains is bounded above in the sense of currents by a negative constant independent of $m$. This
verifies a conjecture of S. Kobayashi and H. Wu for such $E_{2m}$.
\end{abstract}

\maketitle
\section{Introduction}
\noindent We continue our study of the Wu metric from \cite{BM} by focusing
on the following class of non-convex pseudo-egg domains
\begin{equation} \label{E0}
E_{2m} = \big\{ z \in \mbb C^n : \abs{z_1}^{2m} + \abs{z_2}^2 + \ldots + \abs{z_{n-1}}^2 + \abs{z_n}^{2} <1  \big\},
\end{equation}
for $ 0 < m < 1/2 $. Such a pseudo-egg cannot be biholomorphically transformed to any bounded convex domain. This follows 
by comparing the Kobayashi indicatrices, which must be linearly equivalent if the domains are biholomorphic to each other. Indeed, 
the linear mapping on the tangent space given by the derivative of the biholomorphism renders an equivalence between the Kobayashi
indicatrices at the corresponding points. Linear maps preserve convexity and Kobayashi indicatrix of a bounded convex domain is
convex. On the other hand, the indicatrix  at the origin for a pseudo-egg, (being a copy of $E_{2m}$), is non-convex. This also means that 
the Kobayashi metric fails to satisfy the triangle inequality on the tangent space at the origin $ T_0 E_{2m} $. While the 
Wu metric is indeed a norm, it is not clear if the Wu metric enjoys better regularity than the Kobayashi metric. In general, the Wu
metric may fail to be upper semicontinuous \cite{JP} notwithstanding the fact that the Kobayashi metric is always upper 
semicontinuous. In fact, in the case of $C^2$-smooth convex eggs, i.e. when $m>1$, the Wu metric is only $C^1$-smooth  (see \cite{BM}) while 
the Kobayashi metric is $C^2$-smooth. For the non-convex pseudo-eggs as in (\ref{E0}), first note 
that $\partial E_{2m}$ is not even $C^1$-smooth. Specifically, the non-smooth points of the boundary are given by
$\{ (z_1, \ldots, z_n) \in \partial E_{2m} :  z_1=0 \}$, which is also the boundary of the set $ Z = \{ (0, z_2, \ldots, z_n ) \in E_{2m} \} $; remaining piece of the boundary 
$\partial E_{2m} \setminus Z$ is a smooth strongly pseudoconvex hypersurface. Let us now state our result on the Wu metric for such pseudo eggs.

\begin{thm} \label{T0}
For $ 0 < m < 1/2 $, the Wu metric on $ E_{2m} $ is a continuous Hermitian metric which is real analytic on $ E_{2m} $ except along 
the thin set $Z$. It is nowhere-K\"ahler. Furthermore, its holomorphic sectional curvature is non-constant 
and is bounded above by a negative constant independent of $m$, in the sense of currents. 
\end{thm}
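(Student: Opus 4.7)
The plan is to use the symmetries of $E_{2m}$ to reduce to a normal form, compute the Kobayashi indicatrix explicitly, minimize its enclosing ellipsoid volume to get the Wu metric, and then read off all the claimed properties from the resulting formulas. Concretely, $E_{2m}$ is invariant under the unitary group acting on $(z_2,\dots,z_n)$ and under rotations $z_1\mapsto e^{i\theta}z_1$. Using these, I would reduce to a base point of the form $p=(a,b,0,\dots,0)$ with $a,b\ge 0$ and $a^{2m}+b^{2}<1$, at which the Kobayashi indicatrix should turn out to be a Reinhardt domain that can be computed by combining the Kobayashi metric of the slice disk in the $z_1$-direction (accounting for the factor $|z_1|^{2m}$) with the ball metric in the transverse directions. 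I would expect this to give an explicit algebraic formula for the indicatrix and hence, after symmetry, for the Kobayashi metric at all points.

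Next I would compute the Wu metric as the minimum-volume ellipsoid enclosing this indicatrix. By the $U(n-2)$-symmetry of the normalized point, this ellipsoid is diagonal in the chosen frame, so its computation reduces to a low-dimensional Lagrange-multiplier problem whose solution should be written in closed form in terms of $a$, $b$, $m$. Once this is in hand, real analyticity of the Wu metric $H$ on $E_{2m}\setminus Z$ is immediate since the formulas involve $|z_1|^{2m}=(z_1\bar z_1)^m$ and its derivatives, which are real analytic precisely where $z_1\neq 0$; continuity across $Z$ should follow by inspection of the limit of $H$ as $z_1\to 0$, where the $z_1$-direction degenerates in a controlled way and the transverse block extends continuously.

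For nowhere-K\"ahlerness, I would compute the associated $(1,1)$-form $\omega$ from the explicit $H$ and verify that $d\omega$ does not vanish on any open set of $E_{2m}\setminus Z$; this should reduce to checking that a specific off-diagonal mixed derivative of an entry of $H$ does not identically vanish, and since $Z$ has empty interior this upgrades to nowhere-K\"ahler on all of $E_{2m}$. For the curvature, off $Z$ the holomorphic sectional curvature of the Hermitian metric $H$ is $K(p,v)=-\frac{1}{|v|_H^{4}}\,\partial\bar\partial\log H(v,\bar v)(v,\bar v)$-type expression, and I would substitute the explicit $H$ and optimize over unit $v$ to extract a supremum bound. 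Uniformity in $m$ should follow because the leading-order behavior of the relevant second derivatives is controlled by the strictly pseudoconvex part of $\partial E_{2m}$, not by $m$. Along $Z$, the inequality $K\le -c$ must be interpreted in the sense of currents, which I would prove by a standard regularization: smooth $H$ by convolution, check the classical inequality on the regularization uniformly, and pass to the limit against a positive test form.

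The main obstacle I anticipate is precisely this last step. The Wu metric has only Lipschitz-type regularity across $Z$ in the $|z_1|^{2m}$ variables, so $\partial\bar\partial\log\det H$ can pick up a singular contribution supported on $Z$; I will need to show that this contribution is either absent or has the correct sign as a current, so that the negative classical bound off $Z$ survives globally. This will likely use the explicit form of the potential and the fact that $|z_1|^{2m}$ is plurisubharmonic for $m>0$, combined with a careful choice of the test $(n-1,n-1)$-form supported near $Z$. Controlling the $m$-dependence simultaneously, given that $m\to 0$ is a degenerate limit, will require isolating the $m$-independent strictly pseudoconvex contribution in the curvature formula.
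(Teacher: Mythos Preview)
Your plan has two genuine gaps. First, the reduction you propose is too weak: using only $U(n-1)$ on $\hat z$ and rotations of $z_1$ leaves you with a two-parameter family $(a,b,0,\dots,0)$, whereas $E_{2m}$ has the much larger automorphism group (an action of $\mathrm{Aut}(\mathbb{B}^{n-1})$ twisted into the $z_1$-slot) that collapses everything to the one-parameter family $(p,\hat 0)$. More importantly, your proposed recipe for the Kobayashi indicatrix---``combine the slice disk in the $z_1$-direction with the ball metric in the transverse directions''---is not how the Kobayashi metric on a non-convex domain is obtained; there is no product/slicing formula of this kind. Even at $(p,\hat 0)$ the indicatrix is given by a fairly intricate two-branch formula coming from the Pflug--Zwonek description of complex geodesics for non-convex complex ellipsoids, and the Wu ellipsoid is then pinned down by analyzing the square-convexity of the upper branch. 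Without this input you will not get an explicit $H$ to differentiate, so the downstream claims (real analyticity off $Z$, the K\"ahler obstruction, the curvature computation) cannot get off the ground.

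Second, and independently, your strategy for the curvature bound along $Z$ is the step most likely to fail. Mollifying $H$ and hoping the classical inequality survives is not justified: holomorphic sectional curvature is a nonlinear second-order expression in $H$, and convolution does not preserve upper curvature bounds in general, so ``check the classical inequality on the regularization uniformly'' is exactly the thing you cannot expect. The robust idea here is a comparison argument that exploits the special geometry when $0<m<1/2$: one has $E_{2m}\subset\mathbb{B}^n$, so the Poincar\'e--Bergman metric $g$ of $\mathbb{B}^n$ pulls back to $E_{2m}$, satisfies $i^*g\le C_n\,h_{E_{2m}}$ with equality at the origin (where the Wu ellipsoid is $\mathbb{B}^n$ itself), and has constant negative holomorphic curvature. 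Restricting both metrics to an arbitrary embedded disc through a point of $Z$ and using the decreasing property of holomorphic curvature gives the required current inequality, with a bound depending only on $n$ and not on $m$. This comparison also explains the $m$-uniformity you were trying to extract from the ``strictly pseudoconvex part of $\partial E_{2m}$,'' which is not where the control actually comes from.
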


\noindent This result extends the work of Cheung and Kim (\cite{CK1} and \cite{CK2}) on pseudo-egg domains in $ \mathbb{C}^2 $. It also verifies the 
following conjecture of Kobayashi (\cite{Ko}) (refer \cite{Wu} 
for a modified version of this conjecture due to Wu) for the pseudo-eggs $E_{2m} $.  
\medskip\\
\noindent \underline{Conjecture K-W:} On every Kobayashi complete hyperbolic complex manifold, there exists a $C^k$-smooth (for some $k \geq 0$) complete Hermitian metric with its 
holomorphic curvature \footnote{The term `holomorphic
curvature' stands precisely for the holomorphic {\it sectional} curvature and is said to be strongly negative, if it is bounded above by a negative constant.} bounded above by a negative constant in the sense of currents.
\medskip\\
\noindent {\it Acknowledgements:} We thank our advisor Kaushal Verma for suggesting us this problem.

\section{The Wu metric on $ E_{2m} $ for $ 0 < m < 1/2 $}

\noindent  In order to analyse the Wu metric on $ E_{2m} $, it is natural to first compute the Kobayashi metric on $ E_{2m} $. 
Note that $ E_{2m} $ is a \textit{balanced} pseudoconvex domain in $ \mathbb{C}^n $ and hence the Kobayashi metric at the origin, 
$ K_{E_{2m}} \big( (0, 0, \ldots, 0) ,v \big) = q_{E_{2m}}(v)$ where $q_{E_{2m}}$ denotes the \textit{Minkowski functional} of $E_{2m}$. Let
$ \langle \cdot, \cdot \rangle $ denote the standard Hermitian inner product in $ \mathbb{C}^{n-1} $, and write 
$ z \in \mathbb{C}^n $ as $ z= (z_1, \hat{z})$, where $ \hat{z} = (z_2, \ldots, z_n) $. For $ (p_1, \ldots, p_n) \in E_{2m} $ with $ p_1 \neq 0 $
and $ \Psi $ an automorphism of $ \mathbb{B}^{n-1} $ that takes $ \hat{p} $ to the origin,
\begin{equation} \label{E1}
\Phi (z_1, \ldots, z_n) = \left( \frac{|p_1|}{p_1} \frac{\left( 1 - \abs{\hat{p}}^2 \right)^{1/2m} } { \left( 1 - \langle \hat{z}, 
\hat{p} \rangle \right)^{1/m} }  z_1, \Psi( \hat{z} ) \right)
\end{equation}
is an automorphism of $ E_{2m} $. Furthermore, 
\[
 \Phi \big( (p_1, \ldots, p_n )  \big) = \left( \frac{ |p_1| } {(1 - |p_1|^2)^{1/{2m} } } , \hat{0} \right).
 \]
It follows that the study of the Kobayashi and Wu metrics can be focussed on points $ (p, \hat{0}) $ for $ 0 \leq p < 1 $, as in the case of convex eggs, in 
view of automorphisms of $ E_{2m} $ of the form (\ref{E1}). 

\begin{thm} \label{T4}
 For $ 0 < m < 1/2 $, the Kobayashi metric for $E_{2m}$ at the point $(p, \hat{0})$ for $0<p<1$ is given by
\begin{alignat*}{4}
K_{E_{2m}} \big( (p, \hat{0}), v \big) = 
   \left\{ \begin{array}{lrl}
K_1 \big( (p, \hat{0}), v \big) = \frac{m \alpha (1-t) |v_1|}{p(1- {\alpha}^2) \left(m(1-t) + t \right) }
& \mbox{for} &  w  \leq 1, \\
\\
K_2 \big( (p, \hat{0}), v \big) = \left( \frac{m^2 p^{2m-2} |v_1|^2}{(1-p^{2m})^2}
+ \frac{|v_2|^2}{1- p^{2m}} + \cdots + \frac{|v_n|^2}{1- p^{2m}} \right)^{1/2} & \mbox{for}  & w  \geq \frac{1}{4m(1-m)}, \\
\\
\min \{ K_1 \big( (p, \hat{0}), v \big), K_2 \big( (p, \hat{0}), v \big) \}  \qquad \mbox{for} \; 1 <  w < \frac{1}{4m(1-m)},
\end{array}
\right.
\end{alignat*}
where $ v = (v_1, \ldots, v_n ) $ is a tangent vector at the point $ (p, \hat{0}) $, 
\begin{eqnarray}
w & = & \frac{ p^2 \left( |v_2|^2 + \cdots + |v_n|^2 \right) }{m^2 |v_1|^2}, \label{E32} \\
t & = & \frac{2m^2 w}{1 + 2m(m-1) w +  \big( 1 + 4m(m-1) w \big)^{1/2}}, \label{E33}
\end{eqnarray}
and $ \alpha $ is the unique positive solution of the equation $  \alpha^{2m} - t \alpha^{2m-2} - (1-t) p^{2m} = 0 $ in 
the interval $ (0,1) $. Moreover, there is a $ 1 < w_0 < \frac{1}{4m(1-m)} $ such that
\begin{alignat*}{3}
 K_{E_{2m}} \big( (p, \hat{0}), v \big) &= K_1 \big( (p, \hat{0}), v \big) \qquad \mbox{for} \; \; w \leq w_0,\\
 K_{E_{2m}} \big( (p, \hat{0}), v \big) &  = K_2 \big( (p, \hat{0}), v \big) \qquad \mbox{for} \; \; w \geq w_0.
\end{alignat*}
Furthermore, $ w_0 = \frac{t_0}{ \left(m + (1-m) t_0 \right)^2 }  $ where $ t_0 = \frac{x_0^{2m} - p^{2m} }{ x_0^{2m - 2} - p^{2m}} $
and $ x_0 $ is the solution of the equation
\begin{multline*}
 - (1-m)^2 x^{4m} + \left( -1 - 2m + 2 m^2 + p^{2m} \right) x^{4m-2} - m^2 x^{4m -4} + \left( 1 - (2m - 1) p^{2m} \right) x^{2m} \\
+ \left( 1 + (2m -1) p^{2m} \right) x^{2m-2} - p^{2m} = 0.
\end{multline*}
\end{thm}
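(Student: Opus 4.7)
The plan is to realize $K_{E_{2m}}((p,\hat 0), v)$ as the minimum of two competing upper bounds $K_1$ and $K_2$, each arising from a distinct family of extremal analytic disks, and then match this with a lower bound. After reducing to $(p,\hat 0)$ via (\ref{E1}) and a unitary rotation of $\hat z$, the relevant invariant is the parameter $w$ of (\ref{E32}), which measures the balance between the $v_1$ and $\hat v$ contributions.

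For $K_2$: consider the locally-defined map $\sigma(z_1, \hat z) = (z_1^m, \hat z)$ via the principal branch of $z_1^m$, which embeds $E_{2m} \cap \{z_1 \neq 0\}$ into $\mathbb{B}^n$. A direct application of the standard Bergman/Kobayashi metric formula on the ball at $(p^m, \hat 0)$ to the push-forward $d\sigma(v) = (mp^{m-1}v_1, \hat v)$ gives exactly $K_2$. The complex geodesic $\gamma \colon \Delta \to \mathbb{B}^n$ through $(p^m, \hat 0)$ in direction $d\sigma(v)$, lifted via $\tilde\gamma(\zeta) = (\gamma_1(\zeta)^{1/m}, \hat\gamma(\zeta))$, is a bona fide holomorphic disk $\Delta \to E_{2m}$ precisely when $\gamma_1$ omits $0$ on $\Delta$. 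A direct calculation of the location of this zero will translate the condition into $w \geq 1/(4m(1-m))$, giving $K_{E_{2m}} \leq K_2$ in that regime. In the reverse direction, applying the Schwarz--Pick lemma to $\sigma \circ f$ for any disk $f\colon \Delta \to E_{2m}$ based at $(p, \hat 0)$ (with zeros of $f_1$ handled by approximation) will yield $K_{E_{2m}} \geq K_2$ unconditionally.

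For $K_1$: in the complementary low-$w$ regime the above lift fails since $\gamma_1$ vanishes inside $\Delta$, so a separate family $\{\phi_\alpha\}$ of holomorphic disks must be constructed. I plan to take $\phi_{\alpha,1}(\zeta) = \alpha\,\mu(\zeta)$ for a Möbius self-map $\mu$ of $\Delta$ satisfying $\mu(0) = p/\alpha$, while the transverse components $\hat\phi_\alpha(\zeta)$ lie along the direction of $\hat v$ with a shape factor chosen so that $|\phi_{\alpha,1}(\zeta)|^{2m} + |\hat\phi_\alpha(\zeta)|^2$ just touches $1$ at a single distinguished boundary point of $\Delta$ rather than along an arc. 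The free shape parameter, once optimized, will be encoded by the auxiliary variable $t$ of (\ref{E33}), and the Euler--Lagrange condition for the optimal $\alpha$ will reduce to $\alpha^{2m} - t\alpha^{2m-2} - (1-t)p^{2m} = 0$, yielding the explicit formula $K_1$.

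The main obstacle will be the matching lower bound $K_{E_{2m}} \geq K_1$ in the low-$w$ regime, where $K_1 < K_2$ and the ball embedding alone is insufficient. I plan to address this through a Carath\'eodory-type peak function at the boundary point $(\alpha,\hat w) \in \partial E_{2m}$ where $\phi_\alpha$ exits, built from the multi-valued pairing $(z_1^m, \hat z) \cdot \overline{(\alpha^m, \hat w)}$ and rendered single-valued along each competing test disk by a local branch choice. Combining this with the $K_2$ analysis will yield $K_{E_{2m}} = \min(K_1, K_2)$, from which the three cases follow by monotonicity of the ratio $K_1/K_2$ in $w$. The crossover $w_0$ is determined by $K_1(w_0) = K_2(w_0)$, and eliminating the auxiliary variables $\alpha$ and $t$ from this system produces the polynomial equation for $x_0$ stated. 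Aside from the lower bound, the delicate points will be the precise identification of the shape factor in the $\phi_\alpha$ family, the verification that each $\phi_\alpha$ actually maps into $E_{2m}$ (not merely its closure), and the algebraic elimination needed to match the stated formulas.
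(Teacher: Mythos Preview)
Your overall picture---two competing families of extremal disks, with $K_{E_{2m}}=\min(K_1,K_2)$---is correct and matches the Pflug--Zwonek framework the paper invokes. But the lower-bound half of your plan has a genuine gap. You assert that Schwarz--Pick applied to $\sigma\circ f$, $\sigma(z_1,\hat z)=(z_1^{m},\hat z)$, gives $K_{E_{2m}}\geq K_2$ \emph{unconditionally}, with zeros of $f_1$ ``handled by approximation.'' This is impossible, since the theorem itself gives $K_{E_{2m}}=K_1<K_2$ for $w<w_0$. The obstruction is essential, not technical: your own $K_1$-disks have $\phi_{\alpha,1}=\alpha\,\mu$ with $\mu$ a M\"obius automorphism of $\Delta$, hence $\phi_{\alpha,1}$ vanishes at one interior point and $\phi_{\alpha,1}^{\,m}$ admits no single-valued holomorphic branch on $\Delta$. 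Such disks escape the ball-embedding bound entirely; the infimum over zero-free disks is exactly $K_2$, which is why no approximation can help. The same multi-valuedness issue undermines your proposed Carath\'eodory peak function built from $z_1^{m}$: on any competing disk whose first component vanishes there is no global branch, and these are precisely the disks you must control to obtain $K_{E_{2m}}\geq K_1$.

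The paper itself does not give a self-contained proof but defers to the techniques of \cite{PZ-1996} (their Proposition~2 and Theorem~4). That route is different in kind: rather than seeking an auxiliary holomorphic function $E_{2m}\to\Delta$ for the lower bound, Pflug--Zwonek classify \emph{all} candidate Kobayashi-extremal disks for complex ellipsoids via their boundary behaviour, and then determine directly which candidate realizes the infimum for each direction $v$. The lower bound is thus absorbed into the classification and the branch-point problem never arises. To salvage your approach you would need a genuinely new lower-bound mechanism valid for disks with vanishing first component; absent that, following the Pflug--Zwonek classification is the efficient path.
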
 

\noindent The above result extends the computation done for such pseudo-egg domains in $ \mathbb{C}^2 $ (Theorem 4 of \cite{PZ-1996}). The proof 
relies on the description of complex geodesics with respect to the Kobayashi metric due to Pflug 
and Zwonek (see Proposition 2 of \cite{PZ-1996}). The proof of Theorem \ref{T4} follows using the techniques of \cite{PZ-1996}.

\medskip

\begin{prop} \label{P6}
In terms of the Euclidean coordinates on the tangent bundle $ E_{2m} \times \mathbb{C}^n $, for every $ (p, \hat{0}) \in E_{2m} $, the unit 
sphere of the Wu metric in $ T_{(p, \hat{0})} E_{2m} $ is given by
\[
 r_1 |v_1|^2 + r_2 \left( |v_2|^2 + \ldots + |v_n |^2 \right) = 1
\]
where $ r_1 $ and $ r_2 $ are positive real-valued continuous functions of $ p $. 
\end{prop}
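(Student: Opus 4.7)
The approach is to deduce the split form of the Wu ellipsoid from the $S^1 \times U(n-1)$-symmetry of the Kobayashi indicatrix at $(p, \hat{0})$, and to extract continuity from the explicit formulas of Theorem \ref{T4}. Observe from Theorem \ref{T4} that $K_{E_{2m}}\big((p, \hat{0}), v\big)$ depends on $v = (v_1, \hat{v})$ only through $|v_1|$ and $|\hat{v}|^2 := |v_2|^2 + \cdots + |v_n|^2$; the auxiliary quantities $w, t, \alpha$ and both branches $K_1, K_2$ involve $v$ only in these combinations. Consequently the Kobayashi indicatrix $\cal I_K(p) := \{v \in T_{(p, \hat{0})} E_{2m} : K_{E_{2m}}((p, \hat{0}), v) < 1\}$ is invariant under the action $(v_1, \hat{v}) \mapsto (e^{i\theta} v_1, U\hat{v})$ of $S^1 \times U(n-1)$.

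By its definition the unit ball of the Wu metric at $(p, \hat{0})$ is the unique Hermitian ellipsoid of minimum (Euclidean) volume containing $\cal I_K(p)$. Uniqueness forces it to inherit every linear symmetry of $\cal I_K(p)$, hence to be $S^1 \times U(n-1)$-invariant. A positive-definite Hermitian form $H$ on $\mbb C \oplus \mbb C^{n-1}$ commuting with this action must, by Schur's lemma applied to the two distinct (non-isomorphic) irreducible subrepresentations, split as $H = \mathrm{diag}(r_1, r_2 I_{n-1})$ with $r_1, r_2 > 0$; this yields the claimed sphere
\[
r_1 |v_1|^2 + r_2(|v_2|^2 + \cdots + |v_n|^2) = 1.
\]

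For continuity in $p$, the formulas in Theorem \ref{T4} are continuous in $p \in [0, 1)$, so $\cal I_K(p)$ varies continuously in Hausdorff distance on compact subsets; since the L\"owner--John-type optimization depends continuously on its input, continuity of $r_1(p)$ and $r_2(p)$ follows. The main subtlety I anticipate is that $\cal I_K(p)$ is non-convex for $0 < m < 1/2$, whereas the classical L\"owner--John ellipsoid theory is formulated for convex bodies. I would circumvent this by noting that the smallest Hermitian ellipsoid containing $\cal I_K(p)$ coincides with the smallest Hermitian ellipsoid containing its closed Hermitian convex hull, thereby reducing to the convex case. Alternatively, the explicit piecewise description of $\partial \cal I_K(p)$ via $K_1 = 1$ and $K_2 = 1$ recasts the problem as a two-variable constrained minimization in $(r_1, r_2)$ parametrized by $p$, from which existence, uniqueness, positivity, and continuous dependence on $p$ can be read off directly.
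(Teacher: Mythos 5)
Your argument is correct and is essentially the standard route that the paper (which states Proposition \ref{P6} without an explicit proof, following Cheung--Kim) relies on: the Kobayashi indicatrix at $(p,\hat{0})$ depends only on $|v_1|$ and $|\hat{v}|$, so by uniqueness of the minimal-volume Hermitian ellipsoid (containment by a convex ellipsoid being equivalent to containment of the convex hull) the Wu unit ball inherits the $S^1\times U(n-1)$-invariance and must be diagonal of the stated form, with continuity in $p$ following from the continuous dependence of the indicatrix and of the minimal ellipsoid on it. No gaps worth flagging; the paper simply takes this symmetry reduction as given before passing to the two-variable $(x,y)$ minimization.
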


\medskip

\noindent It turns out that, to determine the best fitting ellipsoid at $  (p, \hat{0}) $, it suffices to find $ r_1, r_2 > 0 $ such that 
the set
\[
\left\{ (v_1, \ldots, v_n) : v_1 \geq 0, \ldots, v_n \geq 0, 
r_1 v_1^2 + r_2 \left( v_2^2 + \ldots + v_n^2 \right) = 1 \right\} 
\]
encloses the smallest volume with the coordinate axes and contains the set 
\begin{eqnarray} \label{E41}
\left\{ (v_1, \ldots, v_n) : v_1 \geq 0, \ldots, v_n \geq 0, 
K_{E_{2m}} \left( ( p, \hat{0} ), v \right) \leq 1 \right\}.
\end{eqnarray}
Recall from Theorem \ref{T4} that there is a $ 1 < w_0 < \frac{1}{4m(1-m)} $ such that
\begin{alignat*}{4}
K_{E_{2m}} \big( (p, \hat{0}), v \big) = 
   \left\{ \begin{array}{lrl}
K_1 \big( (p, \hat{0}), v \big) & \mbox{for} &  w  \leq w_0, 
\\
K_2 \big( (p, \hat{0}), v \big)   & \mbox{for} & w \geq w_0.
\end{array}
\right.
\end{alignat*}
It follows that the boundary of the set described by equation (\ref{E41}) is the union of two curves, determined by whether $ w < w_0 $ or $ w \geq w_0 $.
Henceforth, the portion of the curve $ K_{E_{2m}} ^2 \left( ( p, \hat{0} ), v \right) = 1 $ for $ w \geq w_0 $ and $ w < w_0 $ will be referred to as 
the lower K-curve and the upper K-curve respectively. In terms of the square transformation
\begin{eqnarray*}
 x & = &  v_2^2 + \ldots + v_n^2 \; \mbox{and}\\
 y & = &  v_1^2,
\end{eqnarray*}
the lower $ K$-curve is described by
\begin{equation*} 
\frac{m^2 p^{2m-2} \; y }{(1-p^{2m})^2} + \frac{x}{1- p^{2m}} = 1.
\end{equation*}
The upper $K$-curve is given by the parametric equations, as follows.
\begin{subnumcases}{}
\label{E42}
x(\alpha)  =  \big( v_2 (\alpha)\big)^2 + \ldots +  \big( v_n (\alpha) \big) ^2 = \frac{ \alpha^{4m-2} + p^{4m} - p^{2m} \alpha^{2m-2} 
- p^{2m} \alpha^{2m} } {\alpha^{4m-2}}, \; \mbox{and} \\ \label{E43}
y(\alpha)  =  \big( v_1 (\alpha) \big) ^2 =  \left(\frac{ p \big( m \alpha^{2m-2} - (m-1) \alpha^{2m} - p^{2m} \big)}
{m \alpha^{2m-1} } \right)^2, 
\end{subnumcases}
with $\alpha$ varying between $p$ and $x_0$, where $ x_0 $ is as in Theorem \ref{T4}. As the upper $K$-curve can be represented as the graph of a function,  we may cast the equations (\ref{E42})
and (\ref{E43}) as a single equation 
 $ y( \alpha) = \left( f \big( \sqrt{x(\alpha)} \big) \right)^2 $. The parametric form for the upper $K$-curve as above, shows that $f$ is real analytic.
It is straightforward to verify that $f$ is strictly square convex for $ p < \alpha < x_0 $ (see \cite{CK1}). This fact together with Proposition \ref{P6} 
renders the following result.
\begin{prop} \label{T6} 
The Wu metric on $ E_{2m} $ for $ m < 1/2 $ at the point $ (p, \hat{0})$, $ 0 < p <1 $ is given by
\begin{equation*}
h_{E_{2m}} (p, \hat{0}) = \frac{1} {\left( 1- p^2 \right)^2} d z_1 \otimes d \overline{z}_1 + \frac{1} {\left( 1 - p^{2m} \right) } 
d z_2 \otimes d \overline{z}_2 + \ldots + \frac{1} {\left( 1 - p^{2m} \right) } d z_n \otimes d \overline{z}_n. 
\end{equation*}
\end{prop}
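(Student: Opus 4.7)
My plan is to combine Proposition \ref{P6} with the explicit description of the Kobayashi indicatrix from Theorem \ref{T4}, working in the squared coordinates $x = \abs{v_2}^2 + \cdots + \abs{v_n}^2$ and $y = \abs{v_1}^2$. Under this change of variables the Wu ellipsoid boundary given by Proposition \ref{P6} collapses to the straight line $r_1 y + r_2 x = 1$, while the first-orthant slice of the indicatrix $\{K_{E_{2m}}((p,\hat 0), v)\le 1\}$ becomes a planar region bounded by a portion of the lower K-line
\[
\frac{m^2 p^{2m-2}}{(1-p^{2m})^2}\, y + \frac{1}{1-p^{2m}}\, x = 1
\]
and the graph $y = g(x)$ of the upper K-curve on $[0, x_\ast]$, the two pieces meeting at a transition point $(x_\ast, y_\ast)$. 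The strictly square convex $f$ recorded just before the statement makes $g$ strictly convex, so the Wu computation reduces to maximizing $r_1 r_2^{n-1}$ (inversely proportional to the ellipsoid volume) subject to the half-plane $r_1 y + r_2 x \le 1$ containing the planar indicatrix.

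Setting $\alpha = p$ in (\ref{E42})--(\ref{E43}) pins down the axis-intercept $(0, (1-p^2)^2)$ of the upper K-curve, while the lower K-line hits the $x$-axis at $(1-p^{2m}, 0)$. Containing these two extreme points forces the sharp bounds $r_1 \le (1-p^2)^{-2}$ and $r_2 \le (1-p^{2m})^{-1}$, so it is enough to verify that equality in both already yields a valid enclosing half-plane. For the lower K-line segment, a direct linear substitution shows that containment is equivalent to the scalar inequality
\[
m(1-p^2)\, p^{\,m-1} \;\ge\; 1 - p^{2m}, \qquad 0<p<1,\ 0<m<\tfrac12.
\]
The substitution $p = e^{-s}$ recasts this as $m \sinh s \ge \sinh(ms)$, which follows at once from the monotonicity of $x \mapsto \sinh(x)/x$ on $(0,\infty)$ together with $m < 1$.

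For the upper K-curve, strict convexity of $g$ places it below the chord from $(0, (1-p^2)^2)$ to $(x_\ast, y_\ast)$, so it suffices that this chord itself lie below the candidate Wu line on $[0, x_\ast]$. Both lines agree at $x = 0$, and at $x = x_\ast$ the chord value $y_\ast$ is at most the Wu value, which is precisely the containment of the transition point $(x_\ast, y_\ast)$ already secured in the previous step (as $(x_\ast, y_\ast)$ lies on the lower K-line). Linearity then places the chord uniformly below the Wu line, and therefore so does $g$. With containment in hand, coordinatewise maximization of $r_1 r_2^{n-1}$ singles out $(r_1, r_2) = ((1-p^2)^{-2}, (1-p^{2m})^{-1})$, which translates back to the Hermitian form stated in the proposition. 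The main obstacle is the scalar inequality above: it is what forces the candidate corner-to-corner line to stay outside the non-convex Kobayashi indicatrix, and it is the single place where the specific exponents in the definition of $E_{2m}$ really have to be reckoned with.
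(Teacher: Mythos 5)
Your proposal is correct and takes essentially the same route as the paper: reduce via Proposition \ref{P6} and the square transformation to finding the minimal-volume diagonal ellipsoid containing the planar indicatrix, and use the strict square convexity of $f$ (so that the transformed upper $K$-curve is convex) to conclude that the line through the two axis intercepts $\bigl(0,(1-p^2)^2\bigr)$ and $\bigl(1-p^{2m},0\bigr)$ encloses the whole region, which pins down $r_1=(1-p^2)^{-2}$ and $r_2=(1-p^{2m})^{-1}$. The one step the paper leaves implicit (delegating to \cite{CK1}, \cite{CK2}) is the comparison of the lower $K$-line with this candidate line, and you verify it correctly through the inequality $m(1-p^2)p^{m-1}\ge 1-p^{2m}$, equivalently $m\sinh s\ge \sinh(ms)$ for $p=e^{-s}$, which is exactly where $0<m<1/2$ enters.
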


\noindent Further, using the invariance of the Wu metric under automorphisms of $ E_{2m} $ (as described in (\ref{E1})), we arrive at the following result.

\begin{thm} \label{T5}
For $ 0< m < 1/2$, the Wu metric on $ E_{2m} $ at the point $ (z_1, \ldots, z_n) $ is given by
\begin{equation*}
 \sum_{i,j=1}^n h_{i\bar{j}} (z_1, \ldots, z_n) \; d z_i \otimes d \overline{z}_j,
\end{equation*}
where 
\begin{alignat*}{3}
 h_{1 \bar{1}} ( z_1, \ldots, z_n) & = \frac{ \left(1 - |\hat{z}|^2 \right)^{1/m} } {  \left( \left(1 - |\hat{z}|^2 \right)^{1/m} - |z_1|^2 \right)^2 }, \\
 h_{1 \bar{j}} ( z_1, \ldots, z_n) & =  \frac{  \left(1 - |\hat{z}|^2 \right)^{-1 + 1/m} \overline{z}_1 z_j} 
{ m  \left( \left(1 - |\hat{z}|^2 \right)^{1/m} - |z_1|^2 \right)^2 } \; \; \mbox{for} \; \; 2 \leq j \leq n, \\
h_{i\bar{1}} (z_1, \ldots, z_n) & = \overline{ h_{1 \bar{i} } } (z_1, z_2, \ldots, z_n) \; \; \mbox{for} \; \; 2 \leq i \leq n, \\
h_{j\bar{j}} (z_1, \ldots, z_n) & = \left( \frac{  \left(1 - |\hat{z}|^2 \right)^{-2 + 1/m} |z_1|^2 |z_j|^2 } 
{ m^2  \left( \left(1 - |\hat{z}|^2 \right)^{1/m} - |z_1|^2 \right)^2 } + \frac{1 - |\hat{z}|^2 + |z_j|^2 }{ \left( 1 - | \hat{z}|^2 \right) 
\left( 1 - |\hat{z}|^2 - |z_1|^{2m} \right) } \right) \; \mbox{for} \; 2 \leq j \leq n, \\
h_{i\bar{j}} (z_1, \ldots, z_n) & = \left( \frac{  \left(1 - |\hat{z}|^2 \right)^{-2 + 1/m} |z_1|^2 \overline{z}_i z_j } 
{ m^2  \left( \left(1 - |\hat{z}|^2 \right)^{1/m} - |z_1|^2 \right)^2 } + \frac{ \overline{z}_i z_j }{ \left( 1 - | \hat{z}|^2 \right) 
\left( 1 - |\hat{z}|^2 - |z_1|^{2m} \right) } \right) \; \mbox{for} \; 2 \leq i, j \leq n
 \end{alignat*}
 and $ i \neq j $. 
\end{thm}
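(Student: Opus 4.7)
The plan is to leverage the invariance of the Wu metric under biholomorphisms together with Proposition~\ref{T6} and the explicit automorphisms~(\ref{E1}) of $E_{2m}$. Fix an arbitrary $z=(z_1,\hat{z}) \in E_{2m}$ with $z_1 \neq 0$, and choose $\Psi \in \mathrm{Aut}(\mathbb{B}^{n-1})$ taking $\hat{z}$ to $\hat{0}$. With $p = z$, let $\Phi$ be the associated automorphism of $E_{2m}$ given by~(\ref{E1}); then $\Phi(z) = (q,\hat{0})$ where $q = |z_1|/(1-|\hat{z}|^2)^{1/(2m)}$. Invariance of the Wu metric under $\Phi$, together with the diagonal form supplied by Proposition~\ref{T6}, yields
\[
h_{E_{2m}}(z) = A\, d\Phi_1 \otimes d\overline{\Phi_1} + B\sum_{j=2}^n d\Phi_j \otimes d\overline{\Phi_j},
\]
where $A = (1-q^2)^{-2}$ and $B=(1-q^{2m})^{-1}$. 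Short computations give
\[
(1-q^2)^2 = \frac{\bigl((1-|\hat{z}|^2)^{1/m} - |z_1|^2\bigr)^2}{(1-|\hat{z}|^2)^{2/m}}, \qquad 1-q^{2m} = \frac{1-|\hat{z}|^2-|z_1|^{2m}}{1-|\hat{z}|^2},
\]
already matching the denominators in the stated formulae.

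What remains is the computation of the Jacobian of $\Phi$ at $z$. Treating $\Phi_1$ as a function of the variable point $\zeta$ (with $z$ held fixed as a parameter in~(\ref{E1})) and differentiating, one obtains at $\zeta = z$
\[
\partial_{\zeta_1}\Phi_1 = \frac{|z_1|}{z_1}(1-|\hat{z}|^2)^{-1/(2m)}, \qquad \partial_{\zeta_j}\Phi_1 = \frac{|z_1|}{m}(1-|\hat{z}|^2)^{-1/(2m)-1}\,\overline{z_j}\quad (j\geq 2).
\]
For the $B$-term, the components $\Phi_j = \Psi_{j-1}$ ($j\geq 2$) depend only on the $\hat{\zeta}$-variables, so their contribution vanishes whenever one of the indices equals $1$. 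For $k,l\geq 2$, since $\Psi$ is an automorphism of $\mathbb{B}^{n-1}$ carrying $\hat{z}$ to $\hat{0}$ and the Poincar\'e/Kobayashi metric of $\mathbb{B}^{n-1}$ equals the Euclidean inner product at $\hat{0}$, pulling back the Euclidean inner product via $\Psi$ produces the Poincar\'e metric at $\hat{z}$; explicitly,
\[
\sum_{j=2}^n \partial_{\zeta_k}\Psi_{j-1}\,\overline{\partial_{\zeta_l}\Psi_{j-1}} = \frac{(1-|\hat{z}|^2)\,\delta_{kl} + \overline{z_k}z_l}{(1-|\hat{z}|^2)^2},
\]
which bypasses any need for the explicit form of $\Psi$.

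Assembling each entry via
\[
h_{i\bar{j}}(z) = A\,\partial_{\zeta_i}\Phi_1\,\overline{\partial_{\zeta_j}\Phi_1} + B\sum_{k=2}^n \partial_{\zeta_i}\Phi_k\,\overline{\partial_{\zeta_j}\Phi_k},
\]
one reads off the stated formulae case by case: $h_{1\bar{1}}$ from the $A$-term alone; $h_{1\bar{j}}$ ($j\geq 2$) from the $A$-term after the simplification $|z_1|^2/z_1 = \overline{z_1}$; and $h_{i\bar{j}}$ for $i,j\geq 2$ as a sum of two contributions, with the $A$-term supplying the $|z_1|^2\overline{z_i}z_j/m^2$-piece and the $B$-term supplying the $\overline{z_i}z_j/((1-|\hat{z}|^2)(1-|\hat{z}|^2-|z_1|^{2m}))$-piece together with the diagonal correction when $i=j$. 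The entries $h_{i\bar{1}}$ follow by Hermitian symmetry, and the case $z_1=0$ is handled by continuity (via Theorem~\ref{T0}), since each displayed $h_{i\bar{j}}$ has a continuous extension there. The main obstacle is purely bookkeeping: tracking the exponents of $1-|\hat{z}|^2$ across the two contributions and confirming that they combine into the form stated.
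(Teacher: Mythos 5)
Your proposal is correct and is essentially the paper's own argument: pull back the diagonal form of Proposition~\ref{T6} at $(q,\hat{0})$, $q=|z_1|/(1-|\hat{z}|^2)^{1/2m}$, via the automorphism~(\ref{E1}), with the $\hat{z}$-block of the Jacobian handled through the invariance of the ball's Kobayashi metric under $\Psi$; your computed entries all check out. The only small fix is at $z_1=0$: appeal to the continuity of the Wu metric coming from Proposition~4 of \cite{JP} (as the paper does) rather than to Theorem~\ref{T0}, whose proof rests on the present result.
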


\noindent It follows that the Wu metric is real analytic  on the set $  \{ (z_1, \ldots, z_n ) \in E_{2m} : z_1 \neq 0 \} $.
Moreover, the Wu metric is continuous at points $ (0, z_2, \ldots, z_n ) $ of $ E_{2m} $. Indeed, the continuity and 
completeness of the Wu metric on $ E_{2m} $ follows from Proposition $4$ of \cite{JP}. Completeness of the Wu metric here 
relies on Kobayashi completeness of the domains $E_{2m}$, which is guaranteed  by \cite{Pflug-1984}. Furthermore, it is straightforward to see that
\[
 \frac{ \partial h_{12} }{ \partial z_2} (z_1, \hat{0}) \neq \frac{ \partial h_{22} }{ \partial z_1} (z_1, \hat{0}). 
\]
This shows (see, for instance, Lemma 3.2 of \cite{GF}) that the Wu metric is not K\"ahler at these reference points. Since 
$ E_{2m} \setminus Z$ is the orbit of $ \{(z_1, \hat{0}) \in E_{2m} : z_1 \neq 0 \}$ under the action of the automorphism group of $ E_{2m} $, it follows 
that the Wu metric is not K\"ahler on $ E_{2m} \setminus Z$. Notice that $ E_{2m} \setminus Z $ is an open dense subset of $ E_{2m} $ and hence, the
following result.
\begin{cor}
For any $0<m <1/2$, the Wu metric on $ E_{2m} $ is nowhere K\"{a}hler. 
\end{cor}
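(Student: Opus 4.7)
The plan is a three-step bootstrap: check a single non-K\"ahler identity at a reference slice using Theorem \ref{T5}, spread this failure to the open stratum $E_{2m} \setminus Z$ via the automorphism group, and conclude by topological density.

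First I would verify the pointwise criterion at a reference point $(z_1, \hat{0})$ with $z_1 \neq 0$. Recall that a smooth Hermitian metric $\sum h_{i\bar{j}} dz_i \otimes d\bar{z}_j$ is K\"ahler at a point precisely when $\partial h_{i\bar{j}}/\partial z_k = \partial h_{k\bar{j}}/\partial z_i$ holds there for all indices $i, j, k$ (this is just the closedness of the associated $(1,1)$-form, as in Lemma 3.2 of \cite{GF}). Plugging into the closed-form expressions for $h_{1\bar{2}}$ and $h_{2\bar{2}}$ supplied by Theorem \ref{T5} and differentiating, most terms vanish once $\hat{z} = \hat{0}$, and the surviving expressions depend only on $|z_1|$. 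A direct computation — already flagged as \emph{straightforward} in the excerpt preceding the corollary — yields
\[
\frac{\partial h_{1\bar{2}}}{\partial z_2}(z_1, \hat{0}) \neq \frac{\partial h_{2\bar{2}}}{\partial z_1}(z_1, \hat{0}),
\]
so the Wu metric is not K\"ahler at any such reference point.

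Next I would propagate this to every point of $E_{2m} \setminus Z$ using the automorphism family $\Phi$ of (\ref{E1}). Given any $(q_1, \hat{q}) \in E_{2m}$ with $q_1 \neq 0$, the formula in (\ref{E1}) (composed with a rotation in $z_1$ if needed to make the first coordinate positive real) produces an automorphism sending $(q_1, \hat{q})$ to a reference point $(p, \hat{0})$ with $0 < p < 1$. Since the Wu metric is an intrinsic biholomorphic invariant, its K\"ahler form pulls back to itself, and exterior differentiation commutes with pullback; thus the non-K\"ahler identity lifts from $(p, \hat{0})$ to $(q_1, \hat{q})$.

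Finally, density seals the argument: $Z$ is a complex hypersurface, so $E_{2m} \setminus Z$ is open and dense and meets every nonempty open subset $U \subseteq E_{2m}$. The K\"ahler form of the Wu metric therefore fails to be closed at some point of each such $U$, which is precisely the statement that the Wu metric is nowhere K\"ahler. The only subtlety is that the Wu metric is merely continuous across $Z$, so pointwise K\"ahlerity on $Z$ itself would require a distributional interpretation; the density argument neatly sidesteps this by detecting non-closedness entirely inside the real-analytic locus $E_{2m} \setminus Z$. The main computational obstacle is the explicit inequality in the first step, but the vanishing of $\hat{z}$ trims the formulas enough that it reduces to a one-variable check.
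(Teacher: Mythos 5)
Your proposal is correct and follows essentially the same route as the paper: verify $\partial h_{1\bar{2}}/\partial z_2 \neq \partial h_{2\bar{2}}/\partial z_1$ at the reference points $(z_1,\hat{0})$, $z_1 \neq 0$, via Theorem \ref{T5} and Lemma 3.2 of \cite{GF}, transport the failure of K\"ahlerity over $E_{2m}\setminus Z$ by the automorphisms (\ref{E1}) and invariance of the Wu metric, and conclude by density of $E_{2m}\setminus Z$. Your explicit remark on why the merely continuous behaviour across $Z$ is harmless is a small clarification of the same argument, not a different one.
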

\section{Negative holomorphic sectional curvature}

\noindent Let $ G = \sum_{i,j=1}^n g_{i \bar{j} } dz_i \otimes d \overline{z}_j $ be a $C^2$-smooth Hermitian metric on a complex manifold $ X $. Then
the holomorphic sectional curvature of $ G $ along the direction of $ \xi = ( \xi_1, \ldots, \xi_n) \in T_p X $ at $ p \in X $ is given by
\[
 \frac{ \displaystyle\sum R_{i \bar{j} k \overline{l} } (p) \xi_i \overline{\xi}_j \xi_k \overline{\xi}_l } { 
  \displaystyle\sum g_{i \bar{j} } (p)  g_{k \bar{l} } (p) \xi_i \overline{\xi}_j \xi_k \overline{\xi}_l  }, 
\]
where $ R_{i \bar{j} k \overline{l} } $ are the components of the 
curvature tensor given by
\[
 R_{i \bar{j} k \overline{l} } = - \frac{ \partial ^2 g_{i \bar{j} } }
 { \partial z_k \partial \overline{z}_l } + \sum_{ \alpha, \beta} g^{ \alpha \bar{\beta} } \frac{ \partial g_{i \bar{\beta}} }{ \partial z_k }
  \frac{ \partial g_{\alpha \bar{j}} }{ \partial \overline{z}_l}.
\]
Here, $ ( g^{\alpha \bar{\beta} } ) $ denotes the inverse of the matrix  $ ( g_{\alpha \bar{\beta} } ) $.
 
\medskip

\noindent In case $ G $ is only continuous (and not $C^2$-smooth), then the holomorphic sectional curvature 
is defined in a distributional sense, as a \textit{current} of type $(1,1) $ (cf. following \cite{Wu}) and is said to be
bounded above by a negative constant $ c $ if every embedded Riemann surface $ S $ in $ X $ with $ H \big|_S = h_0 \; d \xi \otimes d \bar{\xi} $
satisfies
\[
 \Delta_{\xi} \log h_0 = \frac{ \partial ^2 \log h_0}{ \partial \xi \partial \bar{\xi} } > -c h_0 \; \partial \xi \wedge \partial \bar{\xi},
\]
in the sense of currents.

\begin{prop}
The holomorphic sectional curvature of the Wu metric on $ E_{2m} $, $ 0 < m < 1/2 $, is bounded above by $ -1/2 $ at every point where the Wu metric is smooth. 
\end{prop}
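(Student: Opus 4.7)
I plan to use the symmetries of the Wu metric to reduce the problem to an algebraic inequality involving just one real variable, and then carry out the curvature computation directly from the formulas in Theorem \ref{T5}.

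By biholomorphic invariance of the Wu metric and the transitivity of $\mathrm{Aut}(E_{2m})$ on $E_{2m}\setminus Z$ via the automorphisms (\ref{E1}), it suffices to verify the bound at points of the form $(p, \hat{0})$ with $0 < p < 1$. The $U(n-1)$-action on $\hat{z}$ fixes each such base point and preserves the Wu metric; after a unitary rotation of the tangent space we may therefore take $\xi = (\xi_1, \xi_2, 0, \ldots, 0)$ with $\xi_2 \ge 0$ real. At $(p, \hat{0})$ the matrix $[h_{i\bar j}]$ is diagonal by Proposition \ref{T6}, so the curvature tensor reduces to
\[
R_{i\bar j k\bar l}\big|_{(p,\hat{0})} \;=\; -\partial_k\partial_{\bar l} h_{i\bar j}\big|_{(p,\hat{0})} + \sum_{\alpha} \frac{(\partial_k h_{i\bar\alpha})(\partial_{\bar l} h_{\alpha\bar j})}{h_{\alpha\bar\alpha}(p,\hat{0})}\bigg|_{(p,\hat{0})}.
\]

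A careful inspection of Theorem \ref{T5} shows that at $(p, \hat{0})$ most of the relevant derivatives vanish. After contracting against $\xi$ the only surviving $R_{i\bar j k\bar l}$ components (with indices restricted to $\{1, 2\}$) are $R_{1\bar 1 1\bar 1}$, $R_{2\bar 2 2\bar 2}$, $R_{1\bar 1 2\bar 2}$, $R_{2\bar 2 1\bar 1}$ and $R_{1\bar 2 2\bar 1} = R_{2\bar 1 1\bar 2}$; the vanishing of $R_{1\bar 2 1\bar 2}$ in particular ensures that $H(\xi)$ is independent of the phase of $\xi_1$. Each surviving component is a closed-form rational function of $p$, $p^m$, $p^{2m}$ and $m$, built from the nonvanishing pieces $\partial_{z_1} h_{1\bar 1} = 2p/(1-p^2)^3$, $\partial_{z_1} h_{j\bar j} = mp^{2m-1}/(1-p^{2m})^2$ and $\partial_{z_2} h_{1\bar 2} = p/[m(1-p^2)^2]$. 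Setting $A = |\xi_1|^2/(1-p^2)^2$ and $B = \xi_2^2/(1-p^{2m})$ (so that $|\xi|^2_h = A + B$), these assemble to
\[
H(\xi)\,(A+B)^2 \;=\; -2A^2 \,+\, H_{\hat z}(p,m)\,B^2 \,+\, C(p,m)\,AB,
\]
where $H_{\hat z}(p,m) := -p^2(1-p^{2m})^2/[m^2(1-p^2)^2] - (2-p^{2m})$ is the curvature in the pure $\hat{z}$-direction and $C(p, m)$ is the closed-form cross coefficient.

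The main obstacle is then the verification of the resulting inequality $H(\xi) + \tfrac12 \le 0$ uniformly in $(p, m, A, B)$. Rearranged, this reads $\tfrac32 A^2 - (1+C)\,AB - (H_{\hat z}+\tfrac12)\,B^2 \ge 0$ on the first quadrant in $(A, B)$; since $H_{\hat z}+\tfrac12 < -\tfrac12$ is strictly negative, both leading coefficients have the right sign, and the only possible obstruction is a positive discriminant. Thus the key inequality reduces to
\[
(1+C(p,m))^2 \;\le\; -6\bigl(H_{\hat z}(p,m) + \tfrac12\bigr) \;=\; -6\,H_{\hat z}(p,m) - 3.
\]
The two extreme regimes $\tau := A/B \to 0$ and $\tau \to \infty$ are reassuring: they correspond respectively to the pure $\hat{z}$-direction (curvature $H_{\hat z} < -1$, well below $-\tfrac12$) and the pure $z_1$-direction, where the restriction of $h$ to the slice $\hat{z} = 0$ is the Poincar\'e-type metric $dz_1\otimes d\bar z_1/(1-|z_1|^2)^2$ of curvature $-2$. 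For the interior, my plan is to clear denominators in the displayed inequality and either produce a sum-of-squares decomposition of the resulting polynomial in $(p, p^m, m)$ or analyse its critical points; the anticipated tight regime is $m \to 0^+$, which should pin down the optimality of the constant $-\tfrac12$.
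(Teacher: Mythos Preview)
Your approach is essentially identical to the paper's: reduce to $(p,\hat 0)$ via the automorphisms (\ref{E1}), compute the curvature tensor there from the formulas of Theorem \ref{T5}, and finish with an algebraic inequality in the single ratio $|\xi_1|^2/|\xi_2|^2$ (the paper defers this last step to \cite{CK1}, which amounts to exactly your discriminant check). The paper records the explicit values of the cross components $R_{1\bar 1 j\bar j}$, $R_{j\bar j 1\bar 1}$, $R_{1\bar j j\bar 1}=R_{j\bar 1 1\bar j}$ that assemble into your coefficient $C(p,m)$, so you will want to carry those through when you complete the final verification.
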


\begin{proof}
A direct computation using Theorem \ref{T5} shows that for $ 0 < p < 1 $,
\begin{alignat*}{3}
R_{1 \bar{1} 1 \bar{1} } \big( (p, \hat{0} ) \big) & =  - \frac{2}{ (1- p^2)^4}, \\
R_{1 \bar{1} j \bar{j} } \big( (p, \hat{0} ) \big) & = - \frac{1 + p^2} {m ( 1-p^2)^3} + \frac{p^2 ( 1- p^{2m}) }{m^2 ( 1-p^2)^4} \; \mbox{for} \; 2 \leq j \leq n,\\
R_{1 \bar{i} i \bar{1} } \big( (p, \hat{0} ) \big) = R_{i \bar{1} 1 \bar{i} } \big( (p, \hat{0} ) \big) & = - \frac{1 + p^2} {m ( 1-p^2)^3} + \frac{p^{2m}}
{(1-p^2)^2 (1-p^{2m}) } \; \mbox{for} \; 2 \leq i \leq n,\\
R_{i \bar{i} 1 \bar{1} } \big( (p, \hat{0} ) \big) & = - \frac{m^2 p^{2m-2}} { (1-p^{2m})^3 } \; \mbox{for} \; 2 \leq i \leq n,\\
R_{i \bar{i} j \bar{j} } \big( (p, \hat{0} ) \big) & = - \frac{1} { (1-p^{2m})^2 } \; \mbox{for} \; 2 \leq i, j \leq n \; \mbox{and} \; i \neq j,\\
R_{i \bar{j} j \bar{i} } \big( (p, \hat{0} ) \big) & = - \frac{p^2}{m^2(1-p^2)^2} - \frac{1}{1-p^{2m}} \; \mbox{for} \; 2 \leq i, j \leq n \; \mbox{and} \; i \neq j,\\
R_{j \bar{j} j \bar{j} } \big( (p, \hat{0} ) \big) & = - \frac{p^2}{m^2 ( 1-p^2)^2 } - \frac{1}{ 1 - p^{2m} } - \frac{1}{ (1 - p^{2m})^2 } \; \mbox{for} \; 
2 \leq j \leq n,
\end{alignat*}
and all other curvature components vanish. Now, arguments similar to those employed in \cite{CK1} using the above computations yield that
\[
 \sum R_{i \bar{j} k \overline{l} } (p) \xi_i \overline{\xi}_j \xi_k \overline{\xi}_l < - \frac{1}{2} \sum g_{i \bar{j} } (p)  g_{k \bar{l} } (p) 
 \xi_i \overline{\xi}_j \xi_k \overline{\xi}_l. 
\]
\end{proof}
\noindent It remains to establish the uniform negativity of curvature on the thin set $Z$. 
\begin{prop}
There is a negative constant $c$ such that the holomorphic sectional curvature of the Wu metric 
at points of $ Z \subset E_{2m} $, is bounded above by $c$ in the sense of currents. Moreover, the constant $c$ is independent of $m$.
\end{prop}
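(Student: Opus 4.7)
The plan is to reduce to a canonical base point via automorphism invariance, then verify the distributional curvature inequality by case analysis on an arbitrary Riemann surface through that point. Fix $z_0 \in Z$ and let $\varphi : \mathbb{D} \to E_{2m}$ be a holomorphic immersion with $\varphi(0) = z_0$; write $\varphi = (\varphi_1, \hat\varphi)$ and $\varphi^* H = h_0\, d\xi \otimes d\bar\xi$. The automorphism formula (\ref{E1}) remains meaningful when $p_1 = 0$, the factor $|p_1|/p_1$ being replaceable by any fixed constant of modulus one; its restriction to $Z \cong \mathbb{B}^{n-1}$ is $\hat z \mapsto \Psi(\hat z)$, which ranges over the (transitive) automorphism group of $\mathbb{B}^{n-1}$. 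Composing $\varphi$ with such an automorphism of $E_{2m}$, one may therefore assume $z_0 = (0, \hat 0)$. The goal becomes to exhibit a constant $c < 0$, independent of $m$, such that $\partial\bar\partial \log h_0 \geq -c\, h_0\, d\xi \wedge d\bar\xi$ in the sense of currents on $\mathbb{D}$.

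I split into two cases. In the first, $\varphi_1 \equiv 0$, so $\varphi(\mathbb{D}) \subset Z$. Setting $z_1 = 0$ in the formulas of Theorem \ref{T5} reveals that the restriction of $H$ to the $\hat z$-directions coincides, up to a dimensional constant, with the Bergman metric on $\mathbb{B}^{n-1} \cong Z$; its holomorphic sectional curvature is a universal negative constant, so the desired inequality holds pointwise and smoothly along $\varphi$ with an $m$-independent bound. In the second case, $\varphi_1 \not\equiv 0$, and the zero set $S_0 := \varphi_1^{-1}(0) \subset \mathbb{D}$ is discrete; on $\mathbb{D} \setminus S_0$ the image $\varphi(\xi)$ lies in $E_{2m} \setminus Z$, so $H$ is real analytic along $\varphi$ and the preceding proposition provides the pointwise inequality $\partial\bar\partial \log h_0 \geq \tfrac{1}{2}\, h_0\, d\xi \wedge d\bar\xi$.

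The main obstacle is to propagate this pointwise inequality across $S_0$ in the distributional sense. Near $\xi_0 \in S_0$ write $\varphi_1(\xi) = (\xi - \xi_0)^k \psi(\xi)$ with $\psi(\xi_0) \neq 0$. Inspection of Theorem \ref{T5} shows that the only non-smooth contribution to $h_0$ at $\xi_0$ comes from the factor $|z_1|^{2m}$ in the denominator $1 - |\hat z|^2 - |z_1|^{2m}$ of the coefficients $h_{j\bar j}$ for $j \geq 2$; this pulls back to an expansion $h_0(\xi) = A(\xi) + B(\xi)\,|\xi - \xi_0|^{2km} + O(|\xi - \xi_0|^{4km})$ with $A$, $B$ real analytic near $\xi_0$ and $A(\xi_0) > 0$. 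Hence $\log h_0$ is continuous at $\xi_0$ with an analogous H\"older expansion, and since $2km > 0$ the pointwise $\partial\bar\partial \log h_0$ (which blows up at worst like $|\xi - \xi_0|^{2km-2}$) is locally integrable near $\xi_0$. The crucial step, which is the heart of the argument and in spirit the analogue of the classical fact that $\partial\bar\partial |\zeta|^{2s}$ carries no Dirac mass at the origin for $s > 0$, is to verify that the distributional $\partial\bar\partial \log h_0$ equals the pointwise one across $\xi_0$, with no singular component supported at $\xi_0$. Granting this, the pointwise inequality on $\mathbb{D} \setminus S_0$ upgrades to the current inequality on all of $\mathbb{D}$, and taking $c$ to be the weaker of the two $m$-independent bounds from the two cases delivers the uniform constant.
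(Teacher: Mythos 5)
Your reduction to the origin via (\ref{E1}) and your case of a surface lying inside $Z$ are fine, and away from the discrete set $S_0$ your second case is also in order (modulo one point you should make explicit: the preceding proposition bounds the curvature \emph{tensor} of the Wu metric where it is smooth, and passing from that to the inequality for the restriction $h_0$ on an arbitrary surface uses the decreasing property of holomorphic curvature on complex submanifolds). The genuine problem is that the step you yourself call the heart of the argument --- that the distributional $\partial\bar\partial\log h_0$ has no singular component at the points of $S_0$, so that the pointwise inequality on $\mathbb{D}\setminus S_0$ upgrades to a current inequality on $\mathbb{D}$ --- is never proved; you ``grant'' it. Continuity of $\log h_0$ together with local integrability of its pointwise Laplacian does not in general force the distributional Laplacian to agree with the pointwise one, so this is exactly where an argument is required, and it is the only place where the failure of smoothness of the Wu metric along $Z$ actually enters. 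The gap is fillable by standard potential theory rather than by your expansion: take a local potential $w$ with $\Delta w=\tfrac12 h_0$ ($h_0$ is continuous, so $w$ exists and is $C^1$); then $\log h_0-w$ is continuous on $\mathbb{D}$ and subharmonic on $\mathbb{D}\setminus S_0$, hence subharmonic across the discrete set $S_0$ by the removable-singularity theorem for subharmonic functions, which rules out any negative mass on $S_0$ and yields the current inequality on all of $\mathbb{D}$. If you prefer your expansion route, you must actually carry out (uniformly over the series in $\abs{\varphi_1}^{2m}$) the verification that $\partial\bar\partial$ of $\abs{\xi-\xi_0}^{2s}\times(\text{real analytic})$, $s>0$, has no mass at $\xi_0$; as written this is only asserted by analogy with $\abs{\zeta}^{2s}$.

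For comparison, the paper takes a route that sidesteps the removability question entirely: since $E_{2m}\subset\mathbb{B}^n$ for $m<1/2$, it pulls back the Bergman--Poincar\'e metric $g$ of $\mathbb{B}^n$ by the inclusion $i$, notes that the best-fitting ellipsoid at the origin is $\mathbb{B}^n$ so that $i^*g$ coincides with $h_{E_{2m}}$ at the origin while $i^*g\le\sqrt{n}\,h_{E_{2m}}$ everywhere, and then runs the logarithmic-average (sub-mean-value) comparison of Cheung--Kim between $G=i^*g\vert_S$ and $H=h_{E_{2m}}\vert_S$ for an arbitrary surface $S$ through the origin, using the real analyticity of $G$, its strongly negative curvature inherited from $\mathbb{B}^n$ by the decreasing property, and the two-sided comparison above; this gives the bound, with a constant depending only on $n$ and not on $m$, simultaneously for surfaces inside $Z$ and transverse to it, and automorphisms then spread it along $Z$ exactly as in your first reduction. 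Your two-case scheme is a legitimate alternative and buys a more hands-on picture of the singularity, but until the no-singular-part claim is actually established the proposal does not prove the proposition.
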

\noindent To prove this proposition, one cannot rely solely on the arguments used in the case of convex eggs (i.e., those presented 
in Appendix B of \cite{CK1}), since the Wu metric is not even $C^1$-smooth on $ Z $. However, the pseudo-eggs have the following property -- for $ m < 1/2$, $ E_{2m} \subset \mathbb{B}^n $, 
where $ \mathbb{B}^n $ denotes the unit ball in $ \mathbb{C}^n $. Hence, one can pull back the standard Poincare-Bergman metric $g$ on $ \mathbb{B}^n $ by the 
inclusion mapping $i:E_{2m} \to \mathbb{B}^n$, to get the metric $i^{*}g$ on $ E_{2m} $ and compare it with the Wu metric $h_{E_{2m}}$. It 
follows that $i^{*} g \leq \sqrt{n}h_{E_{2m}}$. Moreover, in this case, the best fitting ellipsoid at the origin is $\mathbb{B}^n$. 
Furthermore, since the Kobayashi-indicatrix at the origin is (a copy of) $E_{2m}$, it follows that $i^{*}g$ coincides with 
$h_{E_{2m}} $ at the origin.  Now, let $S$ be any embedded Riemann surface with complex coordinate $s$ and passing through the origin at 
$s=0$. To establish the strong negativity of the holomorphic curvature near the origin, in the sense of currents, compare the 
restriction $ G =i^{*}g_{\vert_S} $ with $H=h_{E_{2m}}{\vert_S}$. Real analyticity of $G$ ensures that its 
logarithmic average on small `discs' about $s=0$ in $S$ equals $\Delta \log G(s)$ at $s=0$ -- this can be verified by a Taylor
expansion of $G$ in $s, \overline{s}$. The decreasing property of holomorphic curvature together with the facts that $G$ is a
conformal metric on $S$ of constant negative holomorphic curvature and  
$H \geq 1/\sqrt{n}G$, establishes the strong negativity of the holomorphic curvature current of the Wu metric in a neighbourhood
of the origin as in \cite{CK2}. Notice that the bound on the curvature that we get here, is a constant that depends on the dimension $n$.
But this constant does not depend on $m$. As $Z$ is the orbit of the origin under the action of ${\rm Aut}(E_{2m})$, this analysis carries 
forth to hold throughout $Z$.

\end{document}